\theoremstyle{plain} 
\newtheorem{theorem}{Theorem}
\theoremstyle{definition} 
\theoremstyle{definition} 
\newtheorem*{ex*}{Example}
\theoremstyle{remark} 
\theoremstyle{remark} 
\newtheorem{remark}[theorem]{Remark}
\newtheorem*{remark*}{Remark}
\providecommand{\url}[1]{#1}
\renewcommand{\le}{\leqslant}
\renewcommand{\ge}{\geqslant}
\newcommand{\E}{\operatorname{\mathsf{E}}}
\newcommand{\ii}[1]{\operatorname{\mathsf{I}}\{#1\}}
\newcommand{\R}{\mathbb{R}}
\newcommand{\de}{\delta}
\newcommand{\De}{\Delta}
\newcommand{\vp}{\varepsilon}
\newcommand{\ka}{\kappa}
\newcommand{\la}{\lambda}
\newcommand{\La}{\Lambda}
\newcommand{\x}{{\mathbf{x}}}
\newcommand{\y}{{\mathbf{y}}}
\newcommand{\ten}{\mathbf{10}}
\newcommand{\hf}{\hat f}
\newcommand{\NW}{\mathsf{\,NW}}
\newcommand{\PC}{\mathsf{\,PC}}
\newcommand{\GM}{\mathsf{\,GM}}
\newcommand{\CW}{\mathsf{\,CW}}
\numberwithin{equation}{section}
\begin{document}

\begin{frontmatter}


\title{Monotonicity preservation properties of kernel regression estimators}
%



\author{Iosif Pinelis}

\address{Department of Mathematical Sciences\\
Michigan Technological University\\
Houghton, Michigan 49931, USA\\
E-mail: ipinelis@mtu.edu}

\begin{abstract}
Three common classes of kernel regression estimators are considered: the Nadaraya--Watson (NW) estimator, the Priestley--Chao (PC) estimator, and the Gasser--M\"uller (GM) estimator. 
It is shown that (i) the GM estimator has a certain monotonicity preservation property for any kernel $K$, (ii) the NW estimator has this property if and only the kernel $K$ is log concave, and (iii) the PC estimator does not have this property for any kernel $K$. 
Other related properties of these regression estimators are discussed. 
\end{abstract}

\begin{keyword}
nonparametric estimators \sep kernel regression estimators \sep curve fitting \sep monotonicity preservation property

\MSC[2010]	62G05, 62G08
\end{keyword}


\end{frontmatter}



\section{Introduction,
summary, and discussion}
\label{intro}


We are given points $(x_1,y_1),\dots,(x_n,y_n)$ in $\R^2$. These points may be thought of as particular realizations of random pairs $(X_1,Y_1),\dots,(X_n,Y_n)$. In particular, this includes nonlinear regression models of the form 
\begin{equation}\label{eq:regr}
	Y_i=f(X_i)+\vp_i
\end{equation}
for $i\in[n]:=\{1,\dots,n\}$, where $f$ is a somewhat smooth unknown function from $\R$ to $\R$ and the $\vp_i$'s are random variables such that $\E(\vp_i|X_1,\dots,X_n)=0$ for all $i$. 

One then wants to obtain an estimator $\hf$ of the unknown function $f$. A way to do that is to smooth the data $(x_1,y_1),\dots,(x_n,y_n)$ using a kernel $K$, which is understood as a probability density function (pdf) on $\R$ -- that is, a nonnegative measurable function from $\R$ to $\R$ such that $\int_\R K(u)\,du=1$. The resulting kernel smoothers $\hf$ of the data are called kernel regression estimators. 

The kernel $K$ is usually taken according to the formula 
\begin{equation}\label{eq:K_h}
	K(u)=K_{\ka,h}(u):=\frac1h\,\ka\Big(\frac uh\Big)
\end{equation}
for real $u$, where $\ka$ can be thought of as a fixed kernel, and then $h$ is a positive real number referred to as the bandwidth, whose 
choice may depend on the model, the estimator used, the sample size $n$, and possibly on the data as well; the choice of the ``mother'' kernel $\ka$ may depend on the model and the estimator. See e.g.\ \cite{
chu-marron}. 

Let 
\begin{equation*}
	\x:=(x_1,\dots,x_n)\quad\text{and}\quad \y:=(y_1,\dots,y_n). 
\end{equation*}
The three most common kernel regression estimators are as follows. 

The \emph{Nadaraya--Watson (NW) estimator} \cite{nadaraya,watson64} is defined by the formula 
\begin{equation}\label{eq:NW}
	\hf^\NW_K(x):=\hf^\NW_{K;\x,\y}(x)
	:=\frac{\sum_{i=1}^n{y_i\,K(x-x_i)}}{\sum_{i=1}^n{K(x-x_i)}}
\end{equation}
for all real $x$ such that the denominator $\sum_{i=1}^n{K(x-x_i)}$ of the ratio in \eqref{eq:NW} is nonzero; let us denote the set of all such $x$ by $D^\NW_{K;\x,\y}$:  
\begin{equation*}
	D^\NW_{K;\x,\y}:=\Big\{x\in\R\colon\sum_{i=1}^n{K(x-x_i)}>0\Big\}.
\end{equation*}
For $x\notin D^\NW_{K;\x,\y}$, the value of $\hf^\NW_{K;\x,\y}(x)$ is left undefined. So, 
$D^\NW_{K;\x,\y}$ is the domain (of definition) of the NW estimator $\hf^\NW_{K;\x,\y}$. 

The \emph{Priestley--Chao (PC) estimator} \cite{priestley-chao} is defined by the formula 
\begin{equation}\label{eq:PC}
	\hf^\PC_K(x):=\hf^\PC_{K;\x,\y}(x):=\sum_{i=1}^n y_i\,(x_i-x_{i-1}) K(x-x_i) 
\end{equation}
for all real $x$. Here, it is assumed that the $x_i$'s are in the order of their indices, so that 
\begin{equation}\label{eq:x incr}
	x_1\le\cdots\le x_n
\end{equation}
and that $x_0$ is a real number such that $x_0\le x_1$. 

The \emph{Gasser--M\"uller (GM) estimator} \cite{gasser-muller79} is defined by the formula 
\begin{equation}\label{eq:GM}
	\hf^\GM_K(x):=\hf^\GM_{K;\x,\y}(x):=\sum_{i=1}^n y_i \int_{s_{i-1}}^{s_i}K(x-t)\,dt 
\end{equation}
for all real $x$, where 
\begin{equation*}
	s_i:=(x_i+x_{i+1})/2. 
\end{equation*}
Here, \eqref{eq:x incr} is assumed again, with the additional assumptions $x_0:=-\infty$ and $x_{n+1}:=\infty$, so that 
\begin{equation*}
\text{$s_0=-\infty$\quad and\quad $s_n=\infty$.}	
\end{equation*}
Note that $x_0$ here is not the same as $x_0$ for the PC estimator. 

The PC and GM estimators are defined on the entire real line $\R$, which is thus 
the domain of these two estimators. 

The question considered in the present note is this: 
\begin{itemize}
	\item Under what conditions on the kernel $K$ do the NW, PC, and GM kernel estimators preserve the monotonicity?
\end{itemize}

More specifically, assume that condition \eqref{eq:x incr} holds, as well as the condition 
\begin{equation}\label{eq:y incr}
	y_1\le\cdots\le y_n, 
\end{equation}
so that, if $x_i<x_j$ for some $i$ and $j$ in $[n]$, then $y_i\le y_j$. One can also say that $\x=(x_1,\dots,x_n)$ and $\y=(y_1,\dots,y_n)$ are co-monotone. 
The co-monotonicity condition will be discussed in Section~\ref{discuss}. 



Let us say that the NW kernel estimator preserves the monotonicity for a given kernel $K$ if the function $\hf^\NW_K=\hf^\NW_{K;\x,\y}$ is nondecreasing (on its domain $D^\NW_{K;\x,\y}$) for any natural $n$ and any co-monotone $\x$ and $\y$ in $\R^n$. Similarly defined are the monotonicity preservation properties for the PC and GM kernel estimators, with the domain $D^\NW_{K;\x,\y}$ of course replaced by $\R$ for the latter two estimators. 

%


The main results of this note, which will be proved in Section~\ref{proofs}, are Theorems~\ref{prop:NW}, \ref{prop:PC}, and \ref{prop:GM}, which 
characterize the kernels $K$ for which the NW, PC, and GM kernel estimators preserve the monotonicity.  

\begin{theorem}\label{prop:NW}
The NW kernel estimator preserves the monotonicity for a given kernel $K$ if and only if $K$ is log concave. 
\end{theorem}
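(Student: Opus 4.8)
The plan is to recast the monotonicity-preservation property as a statement about the weights $w_i(x):=K(x-x_i)\big/\sum_{j=1}^n K(x-x_j)$, which turn the estimator into a weighted average $\hf^\NW_K(x)=\sum_{i=1}^n w_i(x)\,y_i$ of the $y_i$'s. Since the requirement is that this average be nondecreasing in $x$ for \emph{every} co-monotone data set, the natural reformulation is that, as $x$ increases, the probability vector $(w_1(x),\dots,w_n(x))$ must shift its mass toward the larger indices (which carry the larger $y_i$). Concretely, I would show that the property is equivalent to the monotone-likelihood-ratio condition that $K(x-x_j)/K(x-x_i)$ be nondecreasing in $x$ whenever $x_i\le x_j$, and then observe that this single condition is precisely log concavity of $K$.

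For necessity, I would specialize to $n=2$ with $x_1<x_2$ and the data $y_1=0$, $y_2=1$, so that
$$\hf^\NW_K(x)=\frac{K(x-x_2)}{K(x-x_1)+K(x-x_2)},$$
which is nondecreasing on its domain if and only if $K(x-x_2)/K(x-x_1)$ is nondecreasing. Writing $u=x-x_2$ and $\delta=x_2-x_1>0$, this says $K(u+\delta)/K(u)$ is nonincreasing in $u$. As $x_1<x_2$ are arbitrary, $\delta>0$ ranges over all positive reals, so monotonicity preservation forces $K(u+\delta)/K(u)$ to be nonincreasing for every $\delta>0$; this is exactly the assertion that $\log K$ is concave, the convexity of the support of $K$ being forced as well, because a genuine ``hole'' in the support would break the monotonicity of the ratio.

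For sufficiency, assume $K$ is log concave, so $K(x-x_j)/K(x-x_i)$ is nondecreasing in $x$ for every pair with $x_i\le x_j$. I would establish stochastic monotonicity of the weights by showing that each tail-versus-head ratio
$$\frac{\sum_{j>k}K(x-x_j)}{\sum_{i\le k}K(x-x_i)}$$
is nondecreasing in $x$, for every $k$ with $1\le k\le n-1$. This follows from the elementary fact that a quotient $(\sum_a f_a)/(\sum_b g_b)$ of sums of nonnegative functions is nondecreasing whenever each individual quotient $f_a/g_b$ is nondecreasing, applied with $f_a=K(\cdot-x_j)$ for $j>k$ and $g_b=K(\cdot-x_i)$ for $i\le k$; here $x_j\ge x_i$, so log concavity supplies the monotonicity of each $f_a/g_b$. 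Consequently the cumulative weights $F_k(x):=\sum_{i\le k}w_i(x)$ are nonincreasing in $x$, i.e.\ the distributions $(w_i(x))_i$ are stochastically increasing in $x$, and the Abel summation identity
$$\hf^\NW_K(x)=y_n-\sum_{k=1}^{n-1}(y_{k+1}-y_k)\,F_k(x)$$
then shows that $\hf^\NW_K$ is nondecreasing, since each $y_{k+1}-y_k\ge0$.

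The main obstacle will be the careful bookkeeping at points where $K$ vanishes. The ratio-of-sums lemma, and the identification of the ratio condition with log concavity, are transparent only where the relevant denominators are positive; near the boundary of the support of $K$, and across any gaps between widely separated design points $x_i$ (which can make the domain $D^\NW_{K;\x,\y}$ disconnected), one must verify that each $F_k$ remains nonincreasing and that $\hf^\NW_K$ does not rise and then fall. This is exactly where the convexity of the support of a log-concave kernel is essential, and where the necessity direction must be arranged so that any failure of log concavity---whether a strictly increasing likelihood ratio or a hole in the support---yields an explicit two-point configuration on which $\hf^\NW_K$ strictly decreases.
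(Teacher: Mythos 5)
Your ``if'' direction is correct and takes a genuinely different route from the paper. The paper proves sufficiency by a self-contained symmetrization identity,
$2\big(\hf^\NW_K(z)-\hf^\NW_K(x)\big)\sum_i k_i\sum_j l_j=\sum_{i,j}(y_j-y_i)(l_jk_i-k_jl_i)$
with $k_i:=K(x-x_i)$, $l_i:=K(z-x_i)$, and checks each summand is nonnegative; you instead go through stochastic monotonicity of the weight vector $(w_1(x),\dots,w_n(x))$ plus Abel summation, which is essentially the monotone-likelihood-ratio argument that the paper attributes to Mukerjee and deliberately replaces with a direct computation. Both proofs rest on exactly the same $2\times2$ inequality $l_jk_i\ge k_jl_i$ for $i\le j$, which the paper derives from log concavity by writing $x-x_i$ and $z-x_j$ as convex combinations of the extreme arguments $x-x_j$ and $z-x_i$, with $t:=(z-x)/(x_j-x_i+z-x)$ and the convention $0^0:=0$. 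A practical remark: if you state your ratio-of-sums lemma in cross-product form, $\big(\sum_a f_a(z)\big)\big(\sum_b g_b(x)\big)\ge\big(\sum_a f_a(x)\big)\big(\sum_b g_b(z)\big)$, rather than as monotonicity of actual quotients, then the ``careful bookkeeping at points where $K$ vanishes'' that you flag as the main obstacle largely dissolves: the termwise inequality $f_a(z)g_b(x)\ge f_a(x)g_b(z)$ holds with zeros allowed, disconnected domains included, and no separate appeal to convexity of the support is needed.

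There is, however, a genuine gap in your necessity argument: ``$K(u+\delta)/K(u)$ nonincreasing in $u$ for every $\delta>0$'' is \emph{not} ``exactly the assertion that $\ln K$ is concave.'' Pointwise it says that $\ln K$ has nonincreasing increments (Wright concavity), which for an arbitrary function yields only midpoint concavity and its dyadic iterates: a nonlinear additive function $A$ (built from a Hamel basis) satisfies $A(u+\delta)-A(u)=A(\delta)$, constant in $u$, hence has nonincreasing increments while being concave on no interval. The implication you need is rescued by regularity, not by the increment condition alone: $\ln K$ is Lebesgue measurable because $K$ is a pdf, and by Sierpi\'nski's theorem a measurable midpoint concave function is concave. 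This is precisely the step the paper makes explicit — from the two-point configuration $n=2$, $y_1=0$, $y_2=1$, $x_1=0$, $x_2=(v-u)/2$, $x=(u+v)/2$, $z=v$ it extracts the midpoint inequality $K\big((u+v)/2\big)^2\ge K(u)K(v)$ for $u,v$ in $s(K):=\{x\in\R\colon K(x)>0\}$, and then cites Sierpi\'nski. With that one repair your necessity argument is sound, and your separate worry about a ``hole'' in the support is subsumed in midpoint concavity under the convention $\ln 0:=-\infty$.
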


Recall here that a nonnegative function $g$ is log concave if $\ln g$ is concave, with $\ln0:=-\infty$. An important example of a log-concave kernel is any normal pdf. Also, if $K=K_{\ka,h}$ is as in \eqref{eq:K_h} with $\ka(u)=c_p\,e^{-|u|^p}$ for some real $p\ge1$ and all real $u$ (with $c_p:=1/\int_{-\infty}^\infty e^{-|u|^p}\,du$) or with  $\ka(u)=b\,e^{-1/(1-u^2)}\ii{|u|<1}$ for all real $u$ (with $b:=1/\int_{-1}^1 e^{-1/(1-u^2)}\,du$ and $\ii\cdot$ denoting the indicator), then $K$ is log concave. 
Also, the arbitrarily shifted and rescaled pdf's of the gamma distribution with shape parameter $\ge1$ and of the beta distribution with both parameters $\ge1$ are log concave. 
It is easy to see that the tails of any log-concave kernel $K$ necessarily decrease at least exponentially fast. Also, clearly the kernel $K_{\ka,h}$ defined by \eqref{eq:K_h} is log concave for each real $h>0$ if the corresponding mother kernel $\ka$ is log concave. 

In a somewhat more specific setting, the ``if'' part of Theorem~\ref{prop:NW} was essentially presented in \cite[Remark~2.1]{mukerjee}, based on a monotone likelihood ratio property of a posterior distribution, with a reference to \cite[Lemma~2, page~74]{lehmann59-test}. However, the latter lemma does not explicitly mention a posterior distribution. Therefore, we shall give a short, direct, and self-contained proof of the ``if'' part of Theorem~\ref{prop:NW}, which will also be used to prove the ``only if'' part of Theorem~\ref{prop:NW}. 

\begin{theorem}\label{prop:PC} 
The PC kernel estimator does not preserve the monotonicity for any given kernel $K$. More specifically, for any kernel $K$, any natural $n$, and any co-monotone $\x$ and $\y$ in $\R^n$, the function $\hf^\PC_{K;\x,\y}$ is not nondecreasing -- unless $\x$ and $\y$ are trivial in the sense that 
\begin{equation}\label{eq:trivial}
\text{$y_i\,(x_i-x_{i-1})=0$ for all $i\in[n]$ }	
\end{equation}
(in which case $\hf^\PC_{K;\x,\y}$ is identically $0$). 
\end{theorem}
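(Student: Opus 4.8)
The key structural observation is that $\hf^\PC_{K;\x,\y}$ is a finite linear combination of translates of the kernel. Writing $c_i:=y_i\,(x_i-x_{i-1})$, we have $\hf^\PC_K(x)=\sum_{i=1}^n c_i\,K(x-x_i)$, and since $K$ is a pdf each translate $K(\cdot-x_i)$ lies in $L^1(\R)$; hence $\hf^\PC_K\in L^1(\R)$ with $\int_\R|\hf^\PC_K|\le\sum_{i=1}^n|c_i|<\infty$. The plan is to exploit this integrability together with the assumed monotonicity.

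First I would record the elementary fact that a nondecreasing function $g\colon\R\to\R$ that is integrable must be identically $0$. Indeed, if $g(x_0)>0$ for some $x_0$, then $g\ge g(x_0)>0$ on $[x_0,\infty)$, forcing $\int_\R|g|=\infty$; similarly $g(x_1)<0$ for some $x_1$ forces divergence of the integral over $(-\infty,x_1]$. Hence $g\le 0$ and $g\ge 0$ everywhere, so $g\equiv0$. Applying this to $g=\hf^\PC_K$, which is nondecreasing by assumption and integrable by the above, I conclude $\hf^\PC_K\equiv0$.

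It then remains to deduce the triviality condition \eqref{eq:trivial} from $\hf^\PC_K\equiv0$. Here I would first collapse repeated nodes: if $a_1<\cdots<a_m$ are the distinct values among the $x_i$, then, because the $x_i$ are sorted, in each maximal block of equal nodes only the first index contributes a nonzero factor $x_i-x_{i-1}$, so $\hf^\PC_K=\sum_{j=1}^m b_j\,K(\cdot-a_j)$, where $b_1=y_1(a_1-x_0)$ and $b_j=y_{i_j}(a_j-a_{j-1})$ for $j\ge2$, $i_j$ being the first index with $x_{i_j}=a_j$. Taking Fourier transforms in the identity $\sum_j b_j\,K(\cdot-a_j)\equiv0$ gives $\big(\sum_j b_j\,e^{-i a_j\xi}\big)\,\hat K(\xi)=0$ for all real $\xi$; since $\hat K$ is continuous with $\hat K(0)=\int_\R K=1$, it is nonzero on a neighborhood of $0$, so the exponential sum $P(\xi):=\sum_j b_j\,e^{-i a_j\xi}$ vanishes there. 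Differentiating $P$ repeatedly at $0$ yields the Vandermonde system $\sum_j b_j(-i a_j)^k=0$ for $k=0,\dots,m-1$ with distinct nodes $a_j$, whence $b_j=0$ for all $j$. Unwinding the block structure then gives $c_i=0$ for every $i\in[n]$, which is exactly \eqref{eq:trivial}; and conversely \eqref{eq:trivial} makes $\hf^\PC_K$ identically $0$, hence trivially nondecreasing.

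I expect the only real obstacle to be this last step, namely passing from $\hf^\PC_K\equiv0$ to the pointwise triviality $c_i=0$, since it is a genuine linear-independence-of-translates statement rather than a size estimate. The short Fourier/Vandermonde argument handles it for every kernel $K$ at once, using only $\hat K(0)=1$; by contrast, the preceding integrability and monotone-implies-zero steps are routine.
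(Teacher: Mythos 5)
Your proposal is correct and follows essentially the same route as the paper: observe $\hf^\PC_{K;\x,\y}\in L^1(\R)$, note that a nondecreasing integrable function must vanish identically, and then pass to the Fourier transform, using $\hat K(0)=1$ and continuity of $\hat K$ to reduce to the vanishing of an exponential sum with distinct frequencies. The only (harmless) difference is at the final step, where the paper cites a textbook lemma on the linear independence of exponentials on an open interval, while you prove it self-containedly by differentiating the (analytic) exponential sum at $0$ and inverting the resulting Vandermonde system.
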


\begin{theorem}\label{prop:GM} 
The GM kernel estimator preserves the monotonicity for any given kernel $K$. 
\end{theorem}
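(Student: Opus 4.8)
The plan is to rewrite the GM estimator in a form that makes monotonicity in $x$ manifest. First I would pass from the kernel $K$ to its cumulative distribution function $F(v):=\int_{-\infty}^v K(u)\,du$, which is well defined and nondecreasing for \emph{any} kernel $K$, with $F(-\infty)=0$ and $F(+\infty)=1$. Substituting $u=x-t$ in each integral in \eqref{eq:GM} gives
\begin{equation*}
\int_{s_{i-1}}^{s_i}K(x-t)\,dt=F(x-s_{i-1})-F(x-s_i),
\end{equation*}
so that, writing $G_i:=F(x-s_i)$ for $i\in\{0,\dots,n\}$, we have $\hf^\GM_K(x)=\sum_{i=1}^n y_i\,(G_{i-1}-G_i)$, where the conventions $s_0=-\infty$ and $s_n=\infty$ force $G_0=1$ and $G_n=0$.

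Next I would apply summation by parts (Abel summation) to this telescoping-type sum. A direct reindexing, using $G_0=1$ and $G_n=0$, yields the key identity
\begin{equation*}
\hf^\GM_K(x)=y_1+\sum_{i=1}^{n-1}(y_{i+1}-y_i)\,F(x-s_i).
\end{equation*}
This representation is the heart of the argument: it exhibits $\hf^\GM_K$ as the constant $y_1$ plus a combination, with coefficients $y_{i+1}-y_i$, of the functions $x\mapsto F(x-s_i)$.

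Finally I would conclude. By the co-monotonicity assumption \eqref{eq:y incr}, each coefficient $y_{i+1}-y_i$ is nonnegative; and since $F$ is nondecreasing, each function $x\mapsto F(x-s_i)$ is nondecreasing on $\R$. Hence $\hf^\GM_K$ is a sum of a constant and nonnegative multiples of nondecreasing functions, and is therefore nondecreasing on $\R$, as required.

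There is no genuine obstacle here: the argument needs no hypothesis on $K$ beyond its being a pdf, which is precisely why the GM estimator, unlike the NW estimator of Theorem~\ref{prop:NW}, preserves monotonicity for every kernel. The only point deserving care is the bookkeeping of the summation by parts together with the boundary values $G_0=1$ and $G_n=0$ coming from the infinite endpoints $s_0$ and $s_n$.
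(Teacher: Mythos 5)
Your proposal is correct and follows essentially the same route as the paper's own proof: both pass to the cdf $F$, perform summation by parts to obtain the identity $\hf^\GM_K(x)=y_1+\sum_{i=1}^{n-1}(y_{i+1}-y_i)\,F(x-s_i)$ (the paper writes this with the index shifted as $y_1+\sum_{j=2}^n\De y_j\,F(x-s_{j-1})$), and conclude from $\De y_j\ge0$ and the monotonicity of $F$, using the boundary values $F(-\infty)=0$, $F(\infty)=1$ coming from $s_0=-\infty$, $s_n=\infty$ in just the way you note.
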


\begin{remark}
It immediately follows from the definitions \eqref{eq:NW}, \eqref{eq:PC}, and \eqref{eq:GM} that the NW, PC, and GM kernel estimators are linear in $\y$. In particular, if $\y$ is replaced by $-\y$, then the values of these estimators change to their opposites. Therefore, the monotonicity preservation property of any one of these three estimators implies the corresponding constancy preservation property, by which we mean the following: if $y_1=\dots=y_n$, then the corresponding values of the estimators 
do not depend on $x$. 

In fact, it is obvious that, for any kernel $K$, the NW and GM kernel estimators have the constancy preservation property; 
moreover, they have the constant preservation property: if $y_1=\dots=y_n=c$, then the NW and GM kernel estimators have the constant value $c$. On the other hand, in view of Theorem~\ref{prop:PC},  for any kernel $K$, the function $\hf^\PC_{K;\x,\y}$ for $y_1=\dots=y_n$ is constant 
if and only if at least one of the following two trivial cases takes place: (i) $y_1=\dots=y_n=0$ or (ii) $x_1=\dots=x_n$. 

The NW and GM kernel estimators also have the shift preservation property (which actually follows from the constant preservation property and the linearity): If $y_1,\dots,y_n$ are replaced by $y_1+c,\dots,y_n+c$ for some real $c$, then $\hf^\NW_{K;\x,\y}$ and $\hf^\GM_{K;\x,\y}$ are replaced by $\hf^\NW_{K;\x,\y}+c$ and $\hf^\GM_{K;\x,\y}+c$, respectively. On the other hand, for any kernel $K$, if $y_1,\dots,y_n$ are replaced by $y_1+c,\dots,y_n+c$ for some real $c$, then $\hf^\PC_{K;\x,\y}$ is replaced by $\hf^\PC_{K;\x,\y}+c$
if and only if at least one of the following two trivial cases takes place: (i) $c=0$ or (ii) $x_1=\dots=x_n$.  

Summarizing this remark, we may say that the NW and GM kernel estimators always have the  constancy and shift preservation properties, whereas the PC estimator practically never has these nice properties. 
\end{remark}

The presence -- or, in the case of the PC estimator, absence -- of the monotonicity and shift preservation properties is illustrated in Figure~\ref{fig:1}. 

The upper row in Figure~\ref{fig:1} shows graphs of $\hf^\NW_{K;\x,\y}$, $\hf^\PC_{K;\x,\y}$, and $\hf^\GM_{K;\x,\y}$ for the (randomly generated) $20$-tuples
\begin{multline}
	\x=(-8.8, -8, -6.8, -6.3, -4.3, -3.9, -3.9, -3.7, -2.8, \\ 
	-2, -1.8, -1, 
-1, 
1.9, 2.3, 2.9, 5.2, 6.5, 9.3, 10) \label{eq:xx}
\end{multline}
and 
\begin{multline}
	\y=(-7.1, -6.1, -5.8, -5.3, -4.9, -1.2, -0.6, -0.4, 0.8, \\ 
	2, 2.1, 2.3, 
2.4, 4.4, 
5.9, 6, 6.9, 7.4, 8.1, 9.3),  \label{eq:yy}
\end{multline}
with $K$ of the form $K_h:=K_{\ka,h}$ as in \eqref{eq:K_h} with $\ka$ being the standard normal density. 

For each of the three graphs in the upper row of Figure~\ref{fig:1}, the bandwidth $h$ of the kernel $K_h$ is determined by cross-validation (see e.g.\ \cite{stone74}) -- that is, as an approximate minimizer (obtained numerically) of 
\begin{equation*}
	\CW(h):=\sum_{j=1}^{20}\big(y_j-\hf_{K_h;\x^{(j)},\y^{(j)}}(x_j)\big)^2
\end{equation*}
in real $h>0$, where $\hf\in\{\hf^\NW,\hf^\PC,\hf^\GM\}$, 
$\x^{(j)}:=(x_1,\dots,x_{j-1},x_{j+1},\dots,x_{20})$, and $\y^{(j)}:=(y_1,\dots,y_{j-1},y_{j+1},\dots,y_{20})$. 

The lower row in Figure~\ref{fig:1} shows graphs of 
$\hf^\NW_{K;\x,\y+\ten}$, $\hf^\PC_{K;\x,\y+\ten}$, and $\hf^\GM_{K;\x,\y+\ten}$ for $\x$ and $\y$ as in \eqref{eq:xx} and \eqref{eq:yy}, with $\y$ replaced by its shifted version $\y+\ten:=(y_1+10,\dots,y_n+10)$. Here the kernel $K$ is of the same form as the one used for the upper row of Figure~\ref{fig:1}, with the bandwidth $h$ still determined by cross-validation. 

For the graphs of $\hf^\PC_{K;\x,\y}$ and $\hf^\PC_{K;\x,\y+\ten}$ in Figure~\ref{fig:1}, it is assumed that $x_0:=x_1-h$. 

The corresponding data points are also shown in Figure~\ref{fig:1}. 

Figure~\ref{fig:1} illustrates the monotonicity and shift preservation properties of the NW and GM estimators and the lack of these properties for 
the PC estimator. 

One may also note that the GM graphs in Figure~\ref{fig:1} look very similar to the corresponding NW graphs. However, other choices of $\x$ and $\y$ suggest that the GM graphs are usually a bit smoother than the NW ones. 

A possible reason for this is that the cross-validation quality $\CW(h)$ for the NW estimator is usually rather flat (that is, almost constant) in a large neighborhood of a minimizer $h$ of $\CW(h)$, and hence the choice of a numerical minimizer $h$ of $\CW(h)$ for the NW estimator may be rather unstable, which can then affect the smoothness of the resulting NW estimator. Moreover, $\CW$ is usually rather flat for the GM estimator as well. This observation is illustrated in Figure~\ref{fig:2}. 

On the other hand, as clearly seen from the definition \eqref{eq:GM}, the GM estimator is always continuous, for any, however discontinuous, kernel $K$. Of course, this cannot be said concerning the NW and PC estimators. 

Figures~\ref{fig:1} and \ref{fig:2} also suggest that, at least for the co-monotone $\x$ and $\y$, the NW and GM curves fit the data $(\x,\y)$ significantly better than   
the PC estimator does. In particular, for $\x$ and $\y$ as in \eqref{eq:xx} and \eqref{eq:yy} the smallest values of the cross-validation quality $\CW$ are about $27.1$, $71.7$, and $20.7$ for the NW, PC, and GM estimators, respectively; here, of course, the lower values of $\CW$ correspond to the higher quality of the fit. 

Figure~\ref{fig:3} is quite similar to Figure~\ref{fig:1} except that in Figure~\ref{fig:3} for the mother kernel $\ka$ we use the ``rectangular'' uniform density on the interval $(-1/2,1/2)$, instead of the standard normal density. 

We see that the graphs of the GM estimator in Figure~\ref{fig:3} are much smoother than the corresponding graphs of the NW estimator; actually, in this setting the GM estimator is a piecewise affine continuous function. 


Concerning the values of the cross-validation quality $\CW$ for the ``rectangular'' kernel $K$, they were about $105.8$ for each of the two instances of the NW estimator and $23.4$ for each of the two instances of the GM estimator, whereas for first and second instances of the PC estimator the respective $\CW$ values were about $99.2$ and $2257.1$. Thus, for such a discontinuous ``rectangular'' kernel $K$, the GM estimator appears to perform much better than the NW and PC ones.  The performance of the PC estimator was especially poor in the second instance. 

Figure~\ref{fig:4} is quite similar to Figures~\ref{fig:1} and \ref{fig:3} except that in Figure~\ref{fig:4} the mother kernel $\ka$ is the infinitely smooth pdf defined by the formula 
$\ka(u)=b\,e^{-1/(1-u^2)}\ii{|u|<1}$ for all real $u$ with $b:=1/\int_{-1}^1 e^{-1/(1-u^2)}\,du$. 

We see that the graphs of the GM estimator in Figure~\ref{fig:4} look again smoother than the corresponding graphs of the NW estimator; however, in this case all the curves are infinitely smooth. 

\begin{figure}[p]
	\centering
		\includegraphics[width=1.00\textwidth]{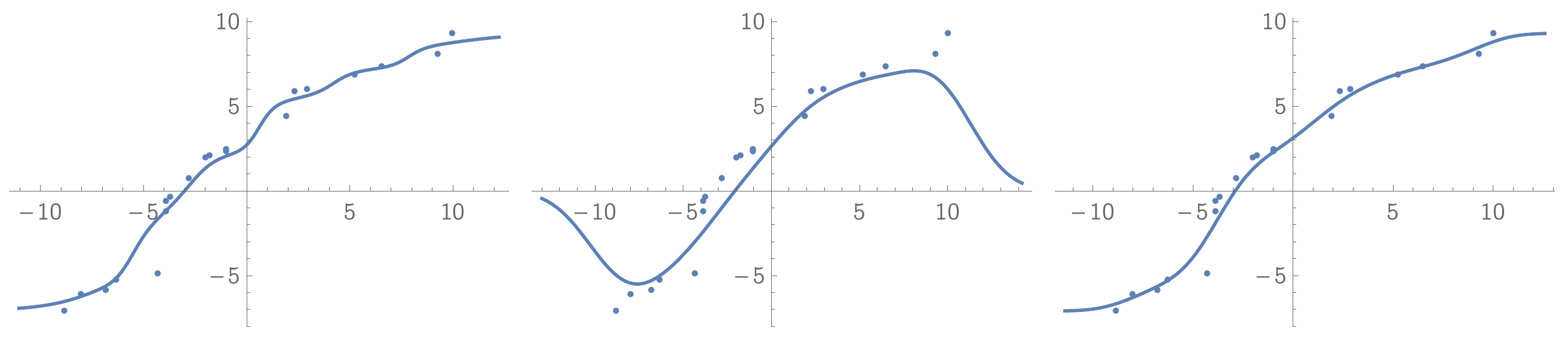}
		
		\smallskip
		
		\includegraphics[width=1.00\textwidth]{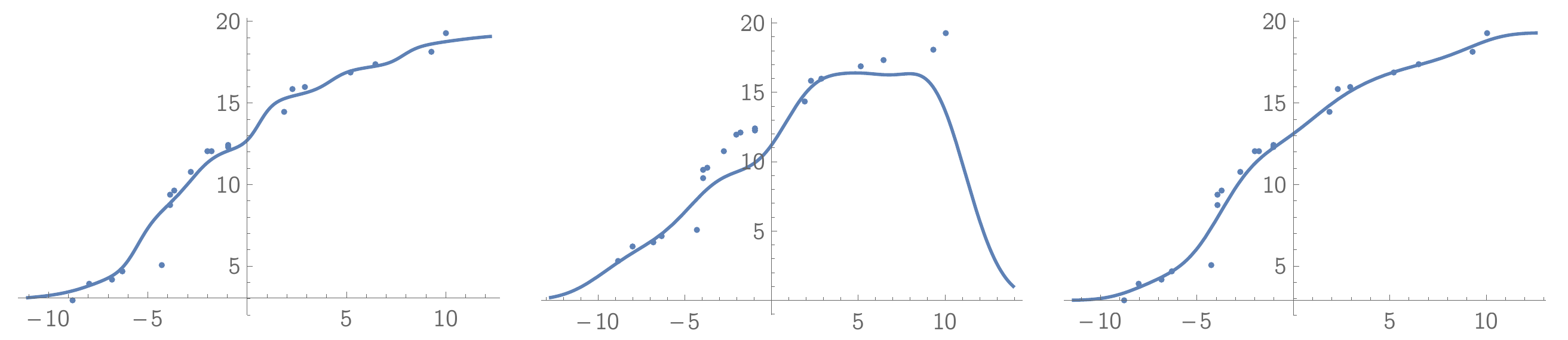}
	\caption{Upper row: graphs of $\hf^\NW_{K;\x,\y}$ (left), $\hf^\PC_{K;\x,\y}$ (middle), and $\hf^\GM_{K;\x,\y}$ (right) for $\x$ and $\y$ as in \eqref{eq:xx} and \eqref{eq:yy}. 
	Lower row: graphs of $\hf^\NW_{K;\x,\y+\ten}$ (left), $\hf^\PC_{K;\x,\y+\ten}$ (middle), and $\hf^\GM_{K;\x,\y+\ten}$ (right) for $\x$ and $\y$ as in \eqref{eq:xx} and \eqref{eq:yy}. Here $K$ is of the form \eqref{eq:K_h} with $\ka$ being the standard normal density.}
	\label{fig:1}
\end{figure}

\begin{figure}[p]
	\centering
		\includegraphics[width=1.00\textwidth]{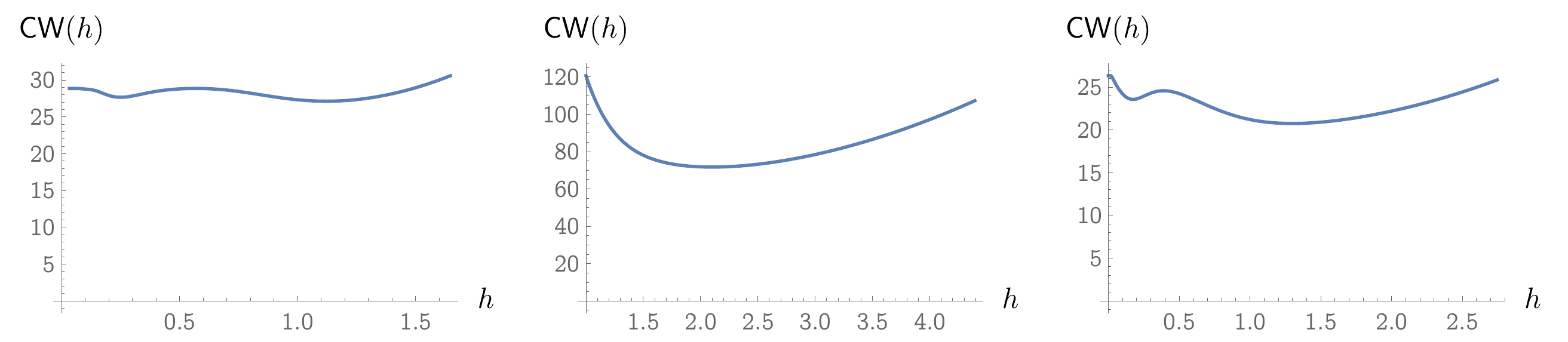}
	\caption{\rule[0pt]{0pt}{0pt} Graphs of $\CW$ for the NW estimator (left), the PC estimator (middle), and the GM estimator (right) for $\x$ and $\y$ as in \eqref{eq:xx} and \eqref{eq:yy}. Here $K$ is of the form \eqref{eq:K_h} with $\ka$ being the standard normal density.}
	\label{fig:2}
\end{figure}

\begin{figure}[p]
	\centering
		\includegraphics[width=1.00\textwidth]{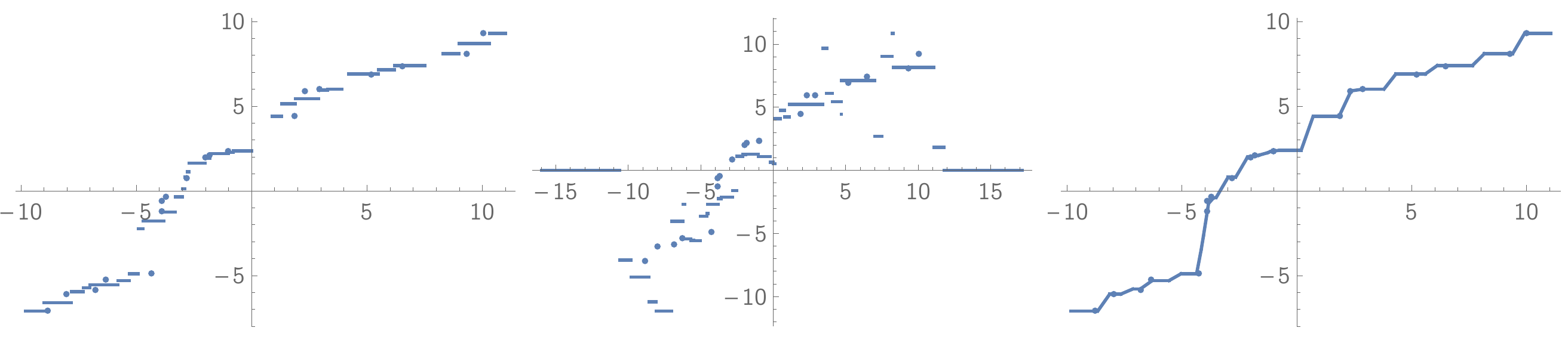}
		
		\smallskip
		
		\includegraphics[width=1.00\textwidth]{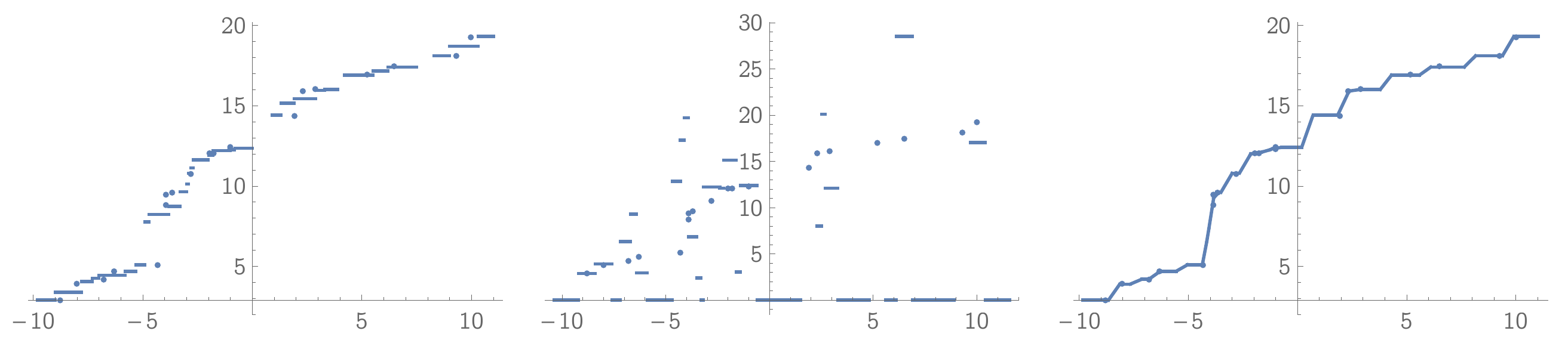}
	\caption{Upper row: graphs of $\hf^\NW_{K;\x,\y}$ (left), $\hf^\PC_{K;\x,\y}$ (middle), and $\hf^\GM_{K;\x,\y}$ (right) for $\x$ and $\y$ as in \eqref{eq:xx} and \eqref{eq:yy}. 
	Lower row: graphs of $\hf^\NW_{K;\x,\y+\ten}$ (left), $\hf^\PC_{K;\x,\y+\ten}$ (middle), and $\hf^\GM_{K;\x,\y+\ten}$ (right) for $\x$ and $\y$ as in \eqref{eq:xx} and \eqref{eq:yy}. Here $K$ is of the form \eqref{eq:K_h} with $\ka$ being the uniform density on $(-1/2,1/2)$.}
	\label{fig:3}
\end{figure}

\begin{figure}[p]
	\centering
		\includegraphics[width=1.00\textwidth]{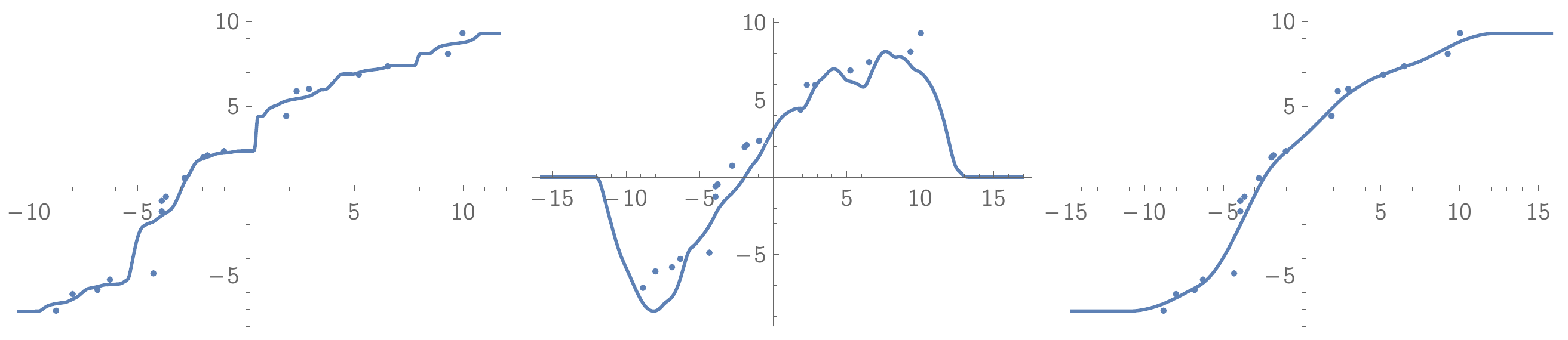}
		
		\smallskip
		
		\includegraphics[width=1.00\textwidth]{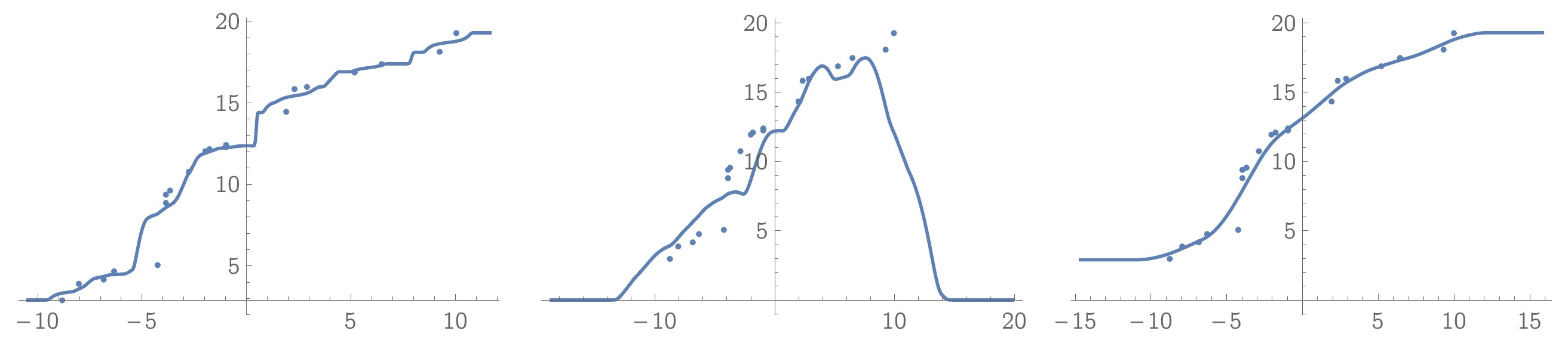}
	\caption{Upper row: graphs of $\hf^\NW_{K;\x,\y}$ (left), $\hf^\PC_{K;\x,\y}$ (middle), and $\hf^\GM_{K;\x,\y}$ (right) for $\x$ and $\y$ as in \eqref{eq:xx} and \eqref{eq:yy}. 
	Lower row: graphs of $\hf^\NW_{K;\x,\y+\ten}$ (left), $\hf^\PC_{K;\x,\y+\ten}$ (middle), and $\hf^\GM_{K;\x,\y+\ten}$ (right) for $\x$ and $\y$ as in \eqref{eq:xx} and \eqref{eq:yy}. Here $K$ is of the form \eqref{eq:K_h} with $\ka(u)=b\,e^{-1/(1-u^2)}\ii{|u|<1}$ for all real $u$, where $b=1/\int_{-1}^1 e^{-1/(1-u^2)}\,du$.}
	\label{fig:4}
\end{figure}

\newpage

\section{Discussion}\label{discuss}

The monotonicity preservation property appears to be natural and desirable for a curve estimator. This is a consequence of the general principle that it is desirable for the values of a statistical estimator to be in the set of all values of the estimated function of the unknown distribution. E.g., it is natural to want 
the values of an estimator of a nonnegative parameter to be nonnegative; 
the values of an estimator of a pdf to be pdf's; etc. 

In the rest of the section, we discuss settings where the co-monotonicity condition either holds naturally or is customarily created, and where therefore the monotonicity preservation property is of value. 

\subsection{Regression settings}\label{regr}
In regression settings, where, for each $i\in[n]$, the pair of numbers $(x_i,y_i)$ represents the observed/measured values of certain two characteristics of the $i$th individual/sample item, the co-monotonicity condition will hardly ever hold, especially when the sample size $n$ is not too small. Yet, as pointed out by Hall and Huang \cite{hall01}, even in such settings, ``Monotone estimates are of course required in many practical applications, where physical considerations suggest that a response should be monotone in the dosage or the explanatory variable.'' 

There are two kinds of methods to obtain a smooth monotone curve estimator in regression settings: 
\begin{enumerate}[(i)]
	\item the IS methods: first  \underline{i}sotonizing/monotonizing the raw regression data $\big((x_1,y_1),\dots,(x_n,y_n)\big)$ to satisfy the co-monotonicity condition and then \underline{s}moothing the monotonized data via (say) a kernel estimator; 
	\item the SI methods: first \underline{s}moothing the raw, non-monotonized regression data via (say) a kernel estimator and then \underline{i}sotonizing/monotonizing the output of the smoothing step.
\end{enumerate}
 
Mammen~\cite{mammen91} showed that, under general and rather common conditions, the IS and SI estimators are first-order equivalent to each other in terms of certain estimation errors, with the PC estimator used for the smoothing step. Somewhat similar results, in a 
more general setting and for a different kind of estimation errors were obtained by Lopuha\"{a} and Musta~\cite{lopuhaa-musta19}, also with a PC-type smoothing estimator. 
%

However, for the IS estimator to output a monotone curve, the kernel estimator used for the second, smoothing step of the IS procedure must have the monotonicity preservation property. As shown in Theorem~\ref{prop:PC}, the PC estimator \emph{never} preserves the monotonicity. This calls for results similar to those in \cite{mammen91,lopuhaa-musta19}, but for the NW and GM estimators in place of the PC one; a near-commutativity, IS$\,\approx\,$SI, of the I- and S-operators can be expected for the NW and GM estimators as well. In fact, the first-order asymptotics of an estimation error for the IS estimator using, for the smoothing step, the NW estimator with the symmetric rectangular kernel was obtained already by Mukerjee \cite[Theorem~4.2]{mukerjee}. Related results for a current status
model were given by Groeneboom, Jongbloed, and Witte \cite{groeneboom-etal10}.

One can also note that the IS methods appear to be computationally easier, because in an IS procedure, in contrast with an SI one, the I step is applied to a \emph{finite} data set; in particular, compare formulas (2.5) and (2.6) in \cite{mammen91} for the I-steps in the IS and and SI procedures, respectively. 

Isotonization of raw data $\big((x_1,y_1),\dots,(x_n,y_n)\big)$ goes back at least to Brunk (see e.g.\ \cite{brunk-etal55}) and Grenander~\cite{grenander56}. A general framework for Grenander-type estimators, including rates of convergence, has been given recently by Westling and Carone~\cite{westling-carone}.

\subsection{Matchings}\label{match}
The co-monotonicty condition holds naturally e.g.\ in the following kind of settings: We have 
two groups -- labeled, say, as an $x$-group and a $y$-group, each consisting of $n$ individuals/items. The items in the $x$-group are ordered according to the values of a certain numerical characteristic, say the $x$-characteristic. Similarly, the items in the $y$-group are ordered according to the values of a certain numerical $y$-characteristic, possibly different from the $x$-characteristic. Then the item in the $x$-group with the $i$th smallest value $x_i$ 
of the $x$-characteristic is \emph{matched} with the $i$th smallest value $y_i$ 
of the $y$-characteristic in the $y$-group. Then, of course, the co-monotonicity conditions \eqref{eq:x incr} and \eqref{eq:y incr} will hold. 
\big(On the other hand, regression models such as \eqref{eq:regr} are not applicable in ``matching'' settings.\big) 

%

For instance, 
let, as usual, $x_{n:i}$ denote the $i$th smallest value among the values $x_1,\dots,x_n$ of an iid sample from the distribution over $\R$ with a cdf $F$; that is, $x_{n:i}$ is the value of the $i$th order statistic. 
Matching then the $x_{n:i}$'s 
with the corresponding $i/n$'s, we obtain the representation $\big((x_{n:1},1/n),\dots,(x_{n:n},n/n)\big)$
of the corresponding empirical distribution function. Clearly, the co-monotonicity condition then holds: $x_{n:1}\le\dots\le x_{n:n}$ and $1/n\le\dots\le n/n$. So, smoothing the discrete graph $\big((x_{n:1},1/n),\dots,(x_{n:n},n/n)\big)$ by means of a monotonicity-preserving kernel estimator, we obtain a smooth nondecreasing estimate of the cdf $F$. 

One can similarly use the inverse discrete graph $\big((1/n,x_{n:1}),\dots,(n/n,x_{n:n})\big)$ to obtain a smooth nondecreasing estimate of the quantile function $F^{-1}$ given, say, by the formula $F^{-1}(u):=\min\{x\in\R\colon F(x)\ge u\}$ for $u\in(0,1)$. Examples of data of a form similar to that of $\big((1/n,x_{n:1}),\dots,(n/n,x_{n:n})\big)$ are, say, household income percentiles data; see e.g.\ such data for the UK~\cite{UKincome} and the US~\cite{USincome}. 

Let now $y_{n:i}$ denote the $i$th smallest value among the values $y_1,\dots,y_n$ of an iid sample from the distribution over $\R$ with a cdf $G$, possibly different from the cdf $F$. Matching then the order statistics values $x_{n:1},\dots,x_{n:n}$
with the corresponding values $y_{n:1},\dots,y_{n:n}$, we obtain the (automatically  co-monotonic) representation $\big((x_{n:1},y_{n:1}),\dots,(x_{n:n},y_{n:n})\big)$ of a Q-Q (quantile-quantile) plot for the cdf's $F$ and $G$. So, applying a monotonicity-preserving smoothing kernel estimator to this Q-Q plot, we obtain a smooth nondecreasing estimate of the ``theoretical quantile-quantile function'' $G^{-1}\circ F$. In particular, this way we can obtain a smooth monotonic plot of the US household income percentiles against the corresponding percentiles for the UK; similar graphical comparisons can of course be made between, say, different strata/categories of a population.  

Yet another example in this vein concerns point processes; see e.g.\ \cite{kallenberg83}. For a real $T>0$, let $\xi$ be a point process on the interval $[0,T]$, that is, an integer-valued (nonnegative) random measure on the Borel $\sigma$-algebra over $[0,T]$, so that $\xi=\sum_{i=1}^N\de_{X_i}$ for some nonnegative integer-valued r.v.\ $N$ and some r.v.'s $X_i$ with values in $[0,T]$, where $\de_a$ denotes the Dirac probability measure at a point $a$. Let $n,x_1,\dots,x_n$ be sample values/realizations of $N,X_1,\dots,X_n$, respectively. Then 
$x_{n:1},\dots,x_{n:n}$ can be interpreted as the consecutive times of the occurrences of the ``events'' of the point process, and  
the co-monotonicity condition will obviously hold for the data $\big((x_{n:1},1),\dots,(x_{n:n},n)\big)$. Smoothing such data by means of a monotonicity-preserving kernel estimator, we obtain a smooth nondecreasing function $\hat f$ that is an estimate of the cumulative intensity function $\La_\xi$ of the point process $\xi$, given by the formula $\La_\xi(t):=\E\xi([0,t])$ for $t\in[0,T]$; then the derivative $\hat f'$ of $\hat f$ may serve as an estimate of the intensity function $\la_\xi:=\La'_\xi$ of the point process $\xi$. Of course, to improve the quality of such estimation, instead of just one realization of the point process one can use a number of such realizations if they are available.


\section{Proofs}\label{proofs}

The proofs of Theorems~\ref{prop:NW}, \ref{prop:PC}, and \ref{prop:GM} given below are each based on quite different ideas. 

\begin{proof}[Proof of Theorem~\ref{prop:NW}]\ \\
Consider first the ``if'' part. Here we suppose that $K$ is log concave. Take any $x$ and $z$ in $D^\NW_{K;\x,\y}$ such that $x<z$. We have to show that $\hf^\NW_K(z)\ge\hf^\NW_K(x)$. Letting for brevity
\begin{equation*}
	k_i:=K(x-x_i)\quad\text{and}\quad l_i:=K(z-x_i),
\end{equation*}
$\sum_i:=\sum_{i\in[n]}$, and $\sum_{i,j}:=\sum_{i\in[n],j\in[n]}$,
we see that 
\begin{equation}\label{eq:6lines}
\begin{aligned}
	&2\big(\hf^\NW_K(z)-\hf^\NW_K(x)\big)
\sum_i k_i \sum_j l_j \\ 
	&=2\Big(\frac{\sum_j y_jl_j}{\sum_j l_j}-\frac{\sum_i y_ik_i}{\sum_i k_i}\Big)
	\sum_i k_i\sum_j l_j \\ 
	&=2\sum_j y_jl_j\sum_i k_i-2\sum_i y_ik_i \sum_j l_j  \\ 	
	&=\sum_j y_jl_j\sum_i k_i+\sum_i y_il_i\sum_j k_j
	-\sum_i y_ik_i \sum_j l_j-\sum_j y_jk_j \sum_i l_i  \\ 	
	&=\sum_{i,j} (y_jl_jk_i+y_il_ik_j
	-y_ik_i l_j-y_jk_j l_i)  \\ 	
	&=\sum_{i,j} (y_j-y_i)(l_j k_i-k_j l_i).    	
\end{aligned}	
\end{equation}
For any $i$ and $j$ in $[n]$ such that $i\le j$ we have $x_i\le x_j$ and hence, by the log-concavity of $K$, $k_i\ge l_i^{1-t}k_j^t$ and $l_j\ge l_i^t k_j^{1-t}$, where  $t:=t_{i,j}:=(z-x)/(x_j-x_i+z-x)\in[0,1)$ and $0^0:=0$, so that $l_j k_i\ge k_j l_i$ and hence $(y_j-y_i)(l_j k_i-k_j l_i)\ge0$. The latter inequality similarly holds for any $i$ and $j$ in $[n]$ such that $i\ge j$. So, by \eqref{eq:6lines}, 
we have $\hf^\NW_K(z)\ge\hf^\NW_K(x)$, which completes the proof of the ``if'' part of Theorem~\ref{prop:NW}. 

Consider now the ``only if'' part of Theorem~\ref{prop:NW}. Here we are assuming that 
the NW kernel estimator preserves the monotonicity for $K$, and we have to show that $K$ is then log concave. It is enough to show that $K$ is log concave on the set $s(K):=\{x\in\R\colon K(x)>0\}$. 

Take $n=2$, $y_1=0$, $y_2=1$, $x_1=0$, $x_2=(v-u)/2$, $x=(v+u)/2$, and $z=v$ for any $u$ and $v$ in $s(K)$ such that $u<v$. Then, by \eqref{eq:6lines}, $l_2 k_1\ge k_2 l_1$, that is, $K((v+u)/2)^2\ge K(u)K(v)$, which means that $\ln K$ 
%
%
is midpoint concave. Also, $\ln K$ is Lebesgue measurable, since $K$ is a pdf. By Sierpi\'nski's theorem \cite{sierp20}, 
any Lebesgue measurable midpoint concave function is concave. 
So, $\ln K$ is concave and thus $K$ is log concave. 
Now the ``only if'' part of Theorem~\ref{prop:NW} is proved as well. 
\end{proof}

\begin{proof}[Proof of Theorem~\ref{prop:PC}]
Take any kernel $K$, any natural $n$, and any co-monotone $\x$ and $\y$ in $\R^n$ such that the function $\hf^\PC_{K;\x,\y}$ is nondecreasing. We have to show that then $\x$ and $\y$ are trivial in the sense that \eqref{eq:trivial} holds. 

Since $K$ is a pdf, \eqref{eq:PC} implies
\begin{equation*}
	\int_\R\hf^\PC_K(x)\,dx:=\sum_{i=1}^n y_i\,(x_i-x_{i-1})\in\R, 
\end{equation*}
so that $\int_\R\hf^\PC_K\in L^1(\R)$. 

However, the only nondecreasing function $f\in L^1(\R)$ is the zero function. Indeed, if $f(a)>0$ for some $a\in\R$, then $f\ge f(a)>0$ on the interval $[a,\infty)$ and hence $\int_{[a,\infty)}f(x)\,dx=\infty$, which contradicts the assumption $f\in L^1(\R)$. Similarly, if $f(a)<0$ for some $a\in\R$, then $f\le f(a)<0$ on the interval $(-\infty,a]$ and hence $\int_{(-\infty,a]}f(x)\,dx=-\infty$, which again contradicts the assumption $f\in L^1(\R)$. 

Therefore and because the function $\hf^\PC_{K;\x,\y}$ was assumed to be nondecreasing, we conclude that $\hf^\PC_{K;\x,\y}$ must be the zero function. Recalling \eqref{eq:PC} again and applying the Fourier transform, we see that 
\begin{equation}\label{eq:=0}
	\sum_{j=1}^n y_j\,(x_j-x_{j-1})e^{itx_j} \hat K(t)=0
\end{equation}
for all real $t$, where $\hat K$ is the Fourier transform/characteristic function of the pdf $K$ given by the formula 
\begin{equation*}
	\hat K(t):=\int_\R e^{itx}K(x)\,dx  
\end{equation*}
and $i$ is the imaginary unit. 
Since $\hat K(0)=1$ and the function $\hat K$ is continuous, there exists some real $t_0>0$ such that for all $t\in(-t_0,t_0)$ we have 
$\hat K(t)\ne0$ and hence, by \eqref{eq:=0}, $\sum_{j=1}^n y_j\,(x_j-x_{j-1})e^{itx_j}=0$ or, equivalently, 
\begin{equation*}
	\sum_{j\in J} y_j\,(x_j-x_{j-1}) e^{itx_j}=0, 
\end{equation*}
where 
\begin{equation*}
	J:=\big\{j\in\{1,\dots,n\}\colon x_j-x_{j-1}\ne0\big\}.
\end{equation*}
Note that the $x_j$'s for $j\in J$ are pairwise distinct -- because for any $j$ and $k$ in $J$ such that $j<k$ we have $x_j\le x_{k-1}<x_k$. 
Using now the textbook fact that exponential functions are linearly independent on any nonempty open interval (cf.\ e.g.\ Lemma~3.2 on page~92 in \cite{barbu} or a more general version for group characters \cite[Theorem~12, page~38]{artin_galois}), we conclude 
that $y_j\,(x_j-x_{j-1})=0$ for all $j\in J$ and hence for all $j\in\{1,\dots,n\}$, which completes the proof of Theorem~\ref{prop:PC}. 
\end{proof}

\begin{proof}[Proof of Theorem~\ref{prop:GM}]
Let $F$ be the cdf corresponding to the pdf $K$, so that 
\begin{equation*}
	F(x)=\int_{-\infty}^x K(u)\,du
\end{equation*}
for $x\in[-\infty,\infty]$. Also introduce 
\begin{equation*}
	(\De_i F)(x):=F(x-s_{i-1})-F(x-s_i) 
\end{equation*}
for all $x\in[-\infty,\infty]$ and all $i=1,\dots,n$ and
\begin{equation*}
	\De y_i:=y_i-y_{i-1}
\end{equation*}
for all $i=2,\dots,n$, so that 
\begin{equation*}
	y_i=y_1+\sum_{j=2}^i \De y_j
\end{equation*}
for all $i=1,\dots,n$; as usual, $\sum_{j=2}^1\dots:=0$. 
Then, recalling the definition \eqref{eq:GM} of the GM kernel estimator, for all real $x$ we have 
\begin{align*}
	\hf^\GM_K(x)&=\sum_{i=1}^n y_i\, (\De_i F)(x) \\ 	 
	&=\sum_{i=1}^n \Big(y_1+\sum_{j=2}^i \De y_j\Big) (\De_i F)(x) \\ 	 
	&=y_1 \sum_{i=1}^n (\De_i F)(x)
	+\sum_{i=1}^n \sum_{j=2}^i \De y_j\, (\De_i F)(x) \\ 	 
	&=y_1
	+\sum_{j=2}^n \De y_j\,\sum_{i=j}^n (\De_i F)(x) \\ 	 
	&=y_1
	+\sum_{j=2}^n \De y_j\,F(x-s_{j-1}), 	 
\end{align*}
taking into account that $s_0=-\infty$ and $s_n=\infty$, whereas $F(-\infty)=0$ and $F(\infty)=1$. 
Also, by \eqref{eq:y incr}, $\De y_j\ge0$ for all $j=2,\dots,n$. 
Now it is obvious that the function $\hf^\GM_K$ is nondecreasing. This completes the proof of Theorem~\ref{prop:GM}. 
\end{proof}

\bibliographystyle{amsplain}
\bibliography{C:/Users/ipinelis/Documents/pCloudSync/mtu_pCloud_02-02-17/bib_files/citations01-09-20}

\end{document}